\theoremstyle{thmstyleone}%
\newtheorem{theorem}{Theorem}
\newtheorem{problem}[theorem]{Problem}%
\theoremstyle{thmstyletwo}%
\newtheorem{example}{Example}%
\newtheorem{remark}{Remark}%
\theoremstyle{thmstylethree}%
\newtheorem{definition}{Definition}%
\begin{document}

\title[Article Title]{Steady State Classification of Allee Effect System}


\author[]{\fnm{Kuo} \sur{Song}}

\author*[]{\fnm{Xiaoxian} \sur{Tang}}\email{xiaoxian@buaa.edu.cn}

\affil*{\orgdiv{School of Mathematical Sciences}, \orgname{Beihang University}, \orgaddress{\city{Beijing}, \postcode{102200}, \country{China}}}




\abstract{In this paper, we consider the steady state classification problem of the Allee effect system for multiple tribes. First, we reduce the high-dimensional model into several two-dimensional and three-dimensional algebraic systems such that we can prove a comprehensive formula of the border polynomial for arbitrary dimension. Then, we propose an efficient algorithm for classifying the generic parameters according to the number of steady states, and we successfully complete the computation for up to the seven-dimensional Allee effect system.
 }

\keywords{Allee effect system, Multistationarity, Border polynomial, Real root classification}



\maketitle

\section{Introduction}\label{section:Introduction}
Solving steady states of the Allen effect system is usually a key problem in many  applied areas. For instance, in evolutionary biology,
 the close correlation  between Allee effects and the risk of population extinction has been discussed since over fifteen  years ago in \cite{bib_bib1}, and  particularly in the context of protecting endangered species and ecosystems,  the multiple Allee effects has great impact  on population management strategies.  Besides,  the Allee effect can be interesting for its impact on profit margins in economics \cite{bib_bib2}.
Recently, in the context of epidemics, the Allee effect is used to study the relationship between the vaccination and the threshold for herd immunity \cite{bib_bib3}. And in artificial intelligence, the Allee effect helps to explore the community formation and the network stability \cite{bib_bib4}. 
Since Allee effect  is so important, in this work, we are interested in how many steady states a population model might admit when it incorporates the Allee effect, which is a cutting edge problem in algebraic biology.  Recall that Gergely Röst and AmirHosein Sadeghimanesh have presented the Allee effect model  \cite[Equation (3)]{bib_bib5}, and we adopt their description as follows,
\begin{equation} 
\dot{x}_i = x_i(1 - x_i)(x_i - b) - (n - 1)ax_i + \sum_{\substack{j=1 \\ j \neq i}}^n a x_j, \quad i = 1, \cdots, n.\label{eq:case1}
\end{equation}
where $x_i\in {\mathbb R}_{\ge 0}$ denotes the population size of the patch, the parameter $a\in {\mathbb R}_{\ge 0}$ denotes the spatial dispersal rate, and the parameter $b\in [0,0.5]$ denotes the Allee threshold.  
We remark that the parameter $b$ is originally nonnegative, and it is known that it is sufficient to consider its value over the interval $[0,0.5]$ due to some symmetry of the system \eqref{eq:case1}, see more details in \cite[Lemmas 2.1 and 2.2]{bib_bib5}.
 Here, by ``steady state classification" we mean to classify the parameters (the spatial dispersal rate and the Allee threshold) according to the number of nonnegative steady states (the population sizes). 

Such a problem can be easily formulated as a real quantifier elimination problem. It is well known
that the real quantifier elimination problem can be carried out by the famous  cylindrical algebraic decomposition (CAD) method \cite{bib_bib6,bib_bib7,bib_bib8,bib_bib9,bib_bib10,bib_bib11,bib_bib12,bib_bib13,bib_bib14_PhDthesis,bib_bib16,bib_bib17,bib_bib18,bib_bib19,bib_bib20,bib_bib23,bib_bib24,bib_bib25,bib_bib26,bib_bib27,bib_bib28,bib_bib29,bib_bib30,bib_bib21,bib_bib22,bib_bib15}. There are several software systems such as QEPCAD \cite{bib_bib15,bib129,bib127,bib_bib22}, Redlog \cite{bib129}, Reduce (in Mathematica) \cite{bib_bib13,bib131} and SyNRAC \cite{bib132}.
Hence, in principle, the steady state classification of the system \eqref{eq:case1} can be carried out automatically using those software systems. However, it is also well known that the complexity (roughly speaking, double exponential in $n$ \cite{bib133,bib_bib19})  of those algorithms is way beyond current computing capabilities when the dimension $n$ (also, it is the number of variables) is arbitrarily large since those algorithms are for general quantifier elimination problems.

The steady state classification problem is basically a real root
classification problem for semi-algebraic systems, which is a special type of quantifier elimination problem. Hence, it would be advisable to apply the method of real root classification (RRC) \cite{bib134,bib135}. 
The main idea of RRC method is to first deal with the algebraic equations in the semi-algebraic systems. 
In the standard methods, there are two ways for doing this: (i) computing a triangular decomposition \cite{bib_Wu1958}, or (ii) computing a Gr\"obner basis \cite{bib_Gronber}.  Depending on the two approaches for solving the equations, there are two concepts ``border polynomial (BP)" \cite{bib134} and ``discriminant variety (DV)" \cite{bib63}, which can be considered as the generalizations of the discriminant for a univariate polynomial. Both methods are more practical than a standard CAD for a general zero-dimensional system.  In \cite{bib_bib5}, Gergely Röst and AmirHosein Sadeghimanesh have classified the steady states of the system \eqref{eq:case1} for $n=2,3$ by applying CAD to the DV. However, in general, the real root classification method might not go beyond these, due to enormous computing time/memory requirements for large $n$ (e.g., one can take a look at the computational timings recorded in  Table \ref{table_1}).

In this work, we propose a novel method for efficiently computing the border polynomials and classifying the steady states of the system \eqref{eq:case1} for any $n$. Briefly, we have the following contributions. 

\begin{itemize}
\item[(I)] For any $n\in {\mathbb N}_{+}$, we have proved a comprehensive formula for a border polynomial of the system \ref{eq:case1}, see Theorem \ref{thm:discriminant}. Experiments show that one can easily compute a border polynomial by Theorem \ref{thm:discriminant} for large $n$ (for instance, it takes only half a minute for $n=100$) while the standard tools can not finish the computation in an acceptable time even for $n=7$, see Table \ref{table_1}. 
\item[(II)] We provide a novel algorithm for classifying the steady states of the system \ref{eq:case1}, see Section \ref{section:Solving Scheme-Region solution}. We have successfully classified all the generic parameters according to the number of steady states for $n=4, 5, 6, 7$. 
\end{itemize}
In the proof of the main result (Theorem \ref{thm:discriminant}), the crucial challenge we have to tackle is to derive a border polynomial when the dimension is 
arbitrarily high. Here, we overcome this problem by studying the structure of the system \eqref{eq:case1}. In fact, we find that any high dimensional system can be reduced to several two-dimensional and three-dimensional systems (Theorem \ref{theorem 1}). The idea is inspired by the second author's old work \cite{bib_bib}, where a high dimensional regulated biological system can be cut down as finitely many two-dimensional systems (but here, we also need to deal with three-dimensional systems). Then, we derive comprehensive formulas for the border polynomials of these low-dimensional systems by running {\tt Maple} command {\tt BorderPolynomial}, which was originally developed as a main function in an algebraic software called {\tt DISCOVERER} \cite{bib134}, and was combined with other tools for solving parametric semi-algebraic systems into the {\tt Maple} package {\tt RegularChains} later \cite{bib_49}.

The rest of this paper is organized as follows. In Section \ref{section:Problem Statement}, we recall  the basic definitions for the steady state, and we formally state the steady state classification problem for the system \eqref{eq:case1}. In Section \ref{section:Solving Scheme}, we present an algorithm for solving the problem. More specifically, in Section \ref{section:Theorem Proof}, we prove that the coordinates of any steady state of the system \eqref{eq:case1} consist of at most three distinct positive numbers (Theorem \ref{theorem 1}). In Section \ref{section:Solving Scheme-Discriminant},  we recall the definition of border polynomial and we derive a comprehensive formula for the border polynomials (Theorem \ref{thm:discriminant}). Also, we compare  our method with two standard methods for computing discriminants (Table \ref{table_1}).  In Section \ref{section:Solving Scheme-Region solution}, we present an algorithm for classifying the steady states. In Section \ref{section:Examples}, we illustrate how the algorithm works by an example for $n = 4$, and we present the classification results for $n=4,5,6,7$. Finally, we end this paper with some future directions inspired by the results, see Section \ref{section:Conclusion and Discussion}.
\section{Problem Statement}\label{section:Problem Statement}

We call $x^*\in {\mathbb R}^n$ is a \textcolor{blue}{\rm steady state} of the system \eqref{eq:case1} for any given pair of parameters $(a, b)\in {\mathbb R}^2$, if the right-hand side of the system vanishes at the point $x^*$.  Notice that only nonnegative steady states (i.e., $x^*\in {\mathbb R}_{\geq 0}^n$) are biologically meaningful. So, in the rest of this paper, when we say ``steady states", we mean ``nonnegative steady states".  

\begin{problem}\label{Problem}
    Given $n\in {\mathbb N}_{+}$, we need to classify the generic parameters $(a, b)\in {\mathbb R}_{\ge 0}\times [0,0.5]$ according to  the number of the steady states of the system \eqref{eq:case1}. That is to say, we hope to efficiently determine the population density of each tribe at the steady state  for generic Allee threshold and spatial dispersal rate between patches.\\
\end{problem}
Note that for $n=1$, the ODE system \eqref{eq:case1} becomes
           \begin{align*}
           \dot{x}_1 =x_1(1 - x_1)(x_1 - b).
               \end{align*}
          Easily, we see that there are three steady states $0$, $1$ and $b$. Also, recall that for $n=2,3$, Problem \ref{Problem} has been solved in \cite{bib14,bib_bib5}.
          So, in this work, we will focus on  Problem \ref{Problem} for $n\geq 4$.
\section{Steady State Classification}\label{section:Solving Scheme}
In this section, we discuss how to efficiently  solve Problem \ref{Problem} for the system \eqref{eq:case1}. First, we prove a theorem (Theorem \ref{theorem 1}) in section \ref{section:Theorem Proof}, which says that for any $n\in {\mathbb N}_{+}$, the system \eqref{eq:case1} can be reduced  into several subsystems with dimension two or three.
After that we derive a comprehensive formula for the border polynomial (Theorem \ref{thm:discriminant}) in section \ref{section:Solving Scheme-Discriminant}. Then, we can develop an efficient algorithm for classifying the steady states in section \ref{section:Solving Scheme-Region solution}. At last, we present the computational results  in section \ref{section:Examples}.
\subsection{Dimension Reduction Theorem}\label{section:Theorem Proof}
\begin{theorem}\label{theorem 1}
 Consider the system of ODEs in \eqref{eq:case1}.
 For any steady state $x=(x_1,\cdots,x_n)\in {\mathbb R}_{\ge 0}^n$, the coordinates of $x$ consist of at most three distinct positive numbers.
\end{theorem}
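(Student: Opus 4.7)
The plan is to observe that, after introducing the scalar invariant $S := \sum_{j=1}^n x_j$, every coordinate of a steady state must satisfy the \emph{same} univariate cubic equation; the conclusion then drops out from the fact that a cubic has at most three distinct real roots.

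First I would rewrite the $i$-th steady-state equation using $\sum_{j\neq i}x_j = S - x_i$, which turns the dispersal terms into $-(n-1)a\,x_i + a(S-x_i) = -na\,x_i + aS$. The equation reads
\[
x_i(1-x_i)(x_i-b) \;-\; na\,x_i \;+\; aS \;=\; 0.
\]
The left-hand side is exactly $P(x_i)$, where
\[
P(t) \;:=\; -t^3 + (1+b)\,t^2 - (b+na)\,t + aS,
\]
and the coefficients of $P$ depend only on $n,a,b,S$ and not on the index $i$. Hence each of $x_1,\ldots,x_n$ is a root of the single cubic polynomial $P$, so the $x_i$'s take at most three distinct real values. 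Since $x\in \mathbb{R}_{\geq 0}^n$, the set of distinct positive values among the $x_i$ has cardinality at most three (and the bound clearly still holds when $0$ happens to be a root of $P$, because then only the remaining two roots can be positive).

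I anticipate no serious technical obstacle for the statement itself: the engine of the proof is the elementary observation that the coupling between patches enters through the linear sum $S$ alone, so once $S$ is treated as an auxiliary parameter the $n$ polynomial equations collapse to a single common cubic. The real difficulty in the wider program arises only afterwards, when this structural bound must be leveraged to produce border polynomials uniformly in $n$ (the content of Theorem \ref{thm:discriminant}); the present theorem is the clean structural input that makes such a reduction possible.
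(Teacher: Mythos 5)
Your proof is correct and follows essentially the same route as the paper: both arguments observe that the coupling enters only through the sum $S=\sum_j x_j$, so every coordinate is a root of one common cubic (your $P(t)$ is exactly the paper's $-g(t)+s(x)$), which immediately bounds the number of distinct values by three.
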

\begin{proof}
For any $z\in {\mathbb R}$, we define a univariate function 
\begin{align*}
 g(z) &:= -z(1-z)(z-b) + naz,
\end{align*}
where $n \in {\mathbb{N}_{+}}$ denotes the dimension, and $a$ and $b$ are real parameters described as in the system \eqref{eq:case1}.
\begin{figure}[H]
    \centering
\includegraphics[width=0.4\linewidth]{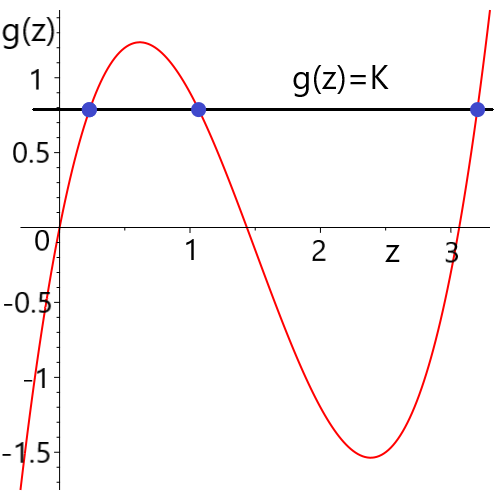}
    \caption{$g(z)=K$ has at most three real solutions.}
    \label{Fig.proof1}
\end{figure}
\noindent
For any  $x=(x_1, \ldots, x_n)\in {\mathbb R}^n$, we define a multivariate function 
\begin{align*}
 s(x) &:= a\sum_{i=1}^nx_i.
 \end{align*}
Then, any steady state of 
the system \eqref{eq:case1} is a common real root of the following polynomials:
\begin{align}
 f_i &:= -g(x_i)+s(x),\quad \quad i=1,\cdots, n.\label{eq:sysf}
\end{align}
Note that $g(z)$ is a cubic polynomial in ${\mathbb R}[z]$. So, for any $K\in {\mathbb R}$, $g(z)=K$ has at most three real solutions, see Fig. \ref{Fig.proof1}. 
Therefore, 
 for any steady state $x=(x_1,\cdots,x_n)\in {\mathbb R}_{\ge 0}^n$ of 
 the system \eqref{eq:case1}, the coordinates of $x$ consist of at most three distinct positive numbers.
\end{proof}
\subsection{Computing Border Polynomials}\label{section:Solving Scheme-Discriminant}
Consider the steady-state system of the ODE system \eqref{eq:case1}:
\begin{align}\label{eq:SAS}
f_i(a,b,x_1,\ldots,x_n)=0, \;& i=1, \ldots, n\notag\\
x_i\geq 0, \; & i=1, \ldots, n \notag\\
a\geq 0, \;& 0\leq b\leq \frac{1}{2} 
\end{align}
where $f_i$ is defined as in \eqref{eq:sysf}.
Below, we present the definition of border polynomial for the system \eqref{eq:SAS}. See the definition for a more general semi-algebraic set in \cite{bib135}. 
\begin{definition}\label{def:bp}\cite[Definition 6.1]{bib135}
Consider the semi-algebraic system \eqref{eq:SAS} in ${\mathbb Q}[a, b, x]$.  If a polynomial $q(a, b)\in {\mathbb Q}[a, b]$ satisfies
\begin{itemize}
    \item[(a)] the system \eqref{eq:SAS}  has only finitely many real solutions for any $(a, b)\in \mathbb{R}^2$ satisfying $q(a, b) \neq 0$, and
    \item[(b)] the number of distinct real solutions of the system \eqref{eq:SAS} is constant in each connected component of $\{(a, b)\in \mathbb{R}^2|q(a, b) \neq 0\}$,
\end{itemize}
then $q(a, b)$ is called a \textcolor{blue}{{\rm border polynomial}} of the system \eqref{eq:SAS}.
\end{definition}

The goal of  this section is to derive a border polynomial of the system \eqref{eq:SAS}. The main ideal is to first reduce the system \eqref{eq:SAS} into several subsystems with small dimensions  according to Theorem \ref{theorem 1}. In fact, by Theorem \ref{theorem 1},
for any steady state $x=(x_1,\cdots,x_n)\in {\mathbb R}_{\ge 0}^n$  of the system \eqref{eq:case1}, 
 we need to deal with the following cases. 
\begin{enumerate}[label={\textbf{Case \arabic*}},ref=Case \arabic*, start=0]
    \item\label{Case_0} 
           If $x_1=\cdots=x_n=y$, then the steady-state system \eqref{eq:case1} becomes one equation 
          \begin{equation}
           \begin{aligned}
           y(1 - y)(y - b)=0.\label{eq:case2}
               \end{aligned}
          \end{equation}
          Easily, we get three solutions $y=0,\;y=1,\;y=b$.
          So, the system \eqref{eq:case1} always has three trivial 
          steady states $(0,\cdots, 0)$, $(1,\cdots, 1)$, and $(b,\cdots, b)$ for any parameters.
           \item\label{Case_1} 
    Assume that the coordinates of $x$ consist of two positive numbers $y$ and $z$. Suppose $y$ and $z$ appear in $x$ respectively $n_1$ and $n_2$ times ($n_1+n_2=n,\;n\ge 2$).  Without loss of generality, we assume that $x_1=\cdots=x_{n_1}=y$ and $x_{n_1+1}=\cdots=x_{n}=z$.
    \item\label{Case_2} 
    Assume that the coordinates of $x$ consist of three positive numbers $y$, $z$ and $w$. Suppose $y$, $z$ and $w$ appear in $x$ respectively $n_1$, $n_2$ and $n_3$ times ($n_1+n_2+n_3=n,\;n\ge 3$). Without loss of generality, we assume that  $x_1=\cdots=x_{n_1}=y,\; x_{n_1+1}=\cdots=x_{n_1+n_2}=z$ and $x_{n_1+n_2+1}=\cdots=x_{n}=w$.
\end{enumerate}
In subsections \ref{section:Solution consists of two numbers} and \ref{section:Solution consists of three numbers}, we will respectively reduce the system \eqref{eq:case1}  for \ref{Case_1} and \ref{Case_2}. Then, in subsection \ref{section:Conclusion}, we will derive a comprehensive formula of 
a border polynomial for any dimension $n$. 
\subsubsection{Case 1: Steady State Consisting of Two Numbers}\label{section:Solution consists of two numbers}
Suppose $x$ is a steady state of the system \eqref{eq:case1}. Note that $x$ is also a real solution of the system \eqref{eq:SAS}. 
According to the hypothesis of \ref{Case_1}, we assume that the coordinates of $x$ consist of two positive numbers $y$ and $z$, and they appear in $x$ respectively $n_1$ and $n_2$ times ($n_1+n_2=n,\;n\ge 2$). We substitute $x_1=\cdots=x_{n_1}=y,\;x_{n_1+1}=\cdots=x_n=z$ into the system \eqref{eq:SAS}, and the first $n$ algebraic equations in \eqref{eq:SAS} becomes the following two equations 
          \begin{align}
           \mathscr{G}_{11}(n_1,n_2) &:= y(1 - y)(y - b) - nay + a(n_1y +n_2z) = 0,\label{eq:f11} \\
           \mathscr{G}_{12}(n_1,n_2) &:= z(1 - z)(z - b) - naz + a(n_1y +n_2z) = 0,\label{eq:f12}
           \end{align}
           where
           \begin{align}\label{eq:con12}
           y,z,a\in {\mathbb R}_{\geq 0},\;\; b\in [0,0.5].
           \end{align}
           We call the above system ${\tt \mathscr{G}}_1(n_1, n_2)$. Using ${\tt Maple}$, we compute a border polynomial of the system ${\tt \mathscr{G}}_1(n_1, n_2)$ with the constraints \eqref{eq:con12}, and we obtain
           \begin{align}
            &{\tt bp}_1(a,b;n_1,n_2):=abn_1n_2(b-\frac{1}{2})(729a^9n_1^5n_2^4 + 16a^6n_1^4n_2^2+ \cdots - \frac{1}{64}b^6) \notag\\&(an_1 + an_2 + b)(an_1 + an_2 - b + 1)(an_1 + an_2 + b^2 - b)\label{bp_1(a,b;n_1,n_2)}
          \end{align}
          Here in the above polynomial, we omit $154$ terms. We provide a ${\tt Maple}$ file \footnote{see: https://github.com/songkuo-ux/Allee-Effect/blob/master/3.2.1.mw} containing the full expression of ${\tt bp}_1(a,b;n_1,n_2)$ for the readers to check the computation presented in this section.
Note that if $n_1=n_2$, then we obtain
          \begin{align}
            &{\tt bp}_1(a,b;n_1,n_1)=abn_1(-\frac{1}{2}+ b)(an_1 + \frac{b}{2})(an_1 - \frac{b}{2} + \frac{1}{2})(27a^3n_1^3 - 9a^2b^2n_1^2 \notag \\&+ ab^4n_1 + 9a^2bn_1^2 - 2ab^3n_1 - 9a^2n_1^2 + 3ab^2n_1 - \frac{1}{4}b^4 - 2abn_1 + \frac{1}{2}b^3 + an_1 - \frac{1}{4}b^2)\notag \\&(2an_1 + b^2 - b)\label{bp_1(a,b;n_1,n_1)}
             \end{align}
For instance, for $n_1=3,\;n_2=1$ the polynomial \eqref{bp_1(a,b;n_1,n_2)} becomes \eqref{bp_1(a,b;3,1)}, and for $n_1=n_2=2$ the polynomial \eqref{bp_1(a,b;n_1,n_1)} becomes \eqref{bp_1(a,b;2,2)},
\begin{align}
    &{\tt bp}_1(a,b;3,1) =3ab(b-\frac{1}{2})(\frac{17}{4}ab^6 - \frac{3}{2}ab^5+ \cdots -\frac{1}{64}b^6)(4a + b)(4a - b + 1)(b^2 + 4a\notag\\& - b), \label{bp_1(a,b;3,1)} \\
    &{\tt bp}_1(a,b;2,2) =2ab(b-\frac{1}{2})(2a + \frac{b}{2})(2a -\frac{b}{2} + \frac{1}{2})(216a^3 - 36a^2b^2 + 2ab^4 + 36a^2b \notag\\&- 4ab^3 - 36a^2 + 6ab^2 - \frac{1}{4}b^4 - 4ab + \frac{1}{2}b^3 + 2a - \frac{1}{4}b^2)(b^2 + 4a - b)\label{bp_1(a,b;2,2)}.
\end{align}
    We respectively present the graphs of  \eqref{bp_1(a,b;3,1)} and \eqref{bp_1(a,b;2,2)} in Fig. \ref{fig:2}. 
\begin{figure}[H]
\centering
\begin{minipage}{0.49\linewidth}
    \centering
    \includegraphics[width=0.8\linewidth]{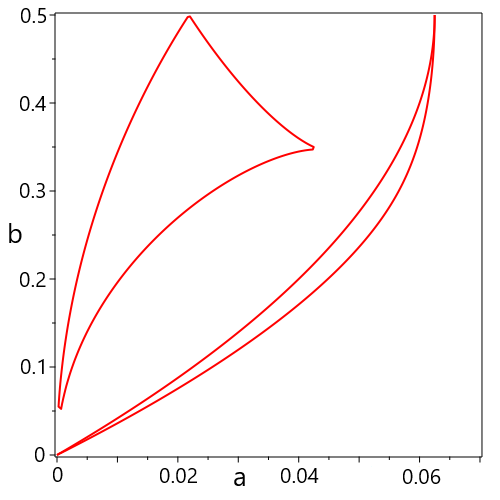}
    \subcaption{}\label{fig:2a}
\end{minipage}
\begin{minipage}{0.49\linewidth}
    \centering
    \includegraphics[width=0.8\linewidth]{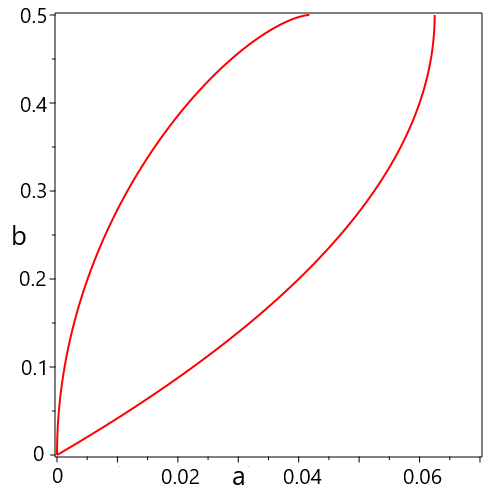}
    \subcaption{}\label{fig:2b}
\end{minipage}
\caption{(a) For $n_1=3,\;n_2=1$, we plot the curve generated by ${\tt bp}_1(a,b;3,1)$. (b) For $n_1=n_2=2$, we plot the curve generated by ${\tt bp}_1(a,b;2,2)$. }
\label{fig:2}
\end{figure}
\begin{remark}\label{remark1}
In this remark, we explain why the range of $a$ presented in Fig. \ref{fig:2b} is $(0,0.07)$ instead of $(0,+\infty)$.
Similarly, one can understand why the range of $a$ in Fig. \ref{fig:2a} is $(0,0.07)$. Note that by \eqref{bp_1(a,b;2,2)}, 
\begin{align}
    {\tt bp}_1(a,b;2,2) &= 2ab(b-\frac{1}{2})(2a + \frac{b}{2})(2a - \frac{b}{2} + \frac{1}{2})g_1(a, b)g_2(a, b),
\end{align}
    where
\begin{align}
    g_1(a, b) &:= 216a^3 - 36a^2b^2 + 2ab^4 + 36a^2b - 4ab^3 - 36a^2 + 6ab^2 - \frac{1}{4}b^4 - 4ab + \frac{1}{2}b^3 + 2a \notag\\&- \frac{1}{4}b^2  \\
    g_2(a, b) &:= b^2 + 4a - b.
\end{align}
Note that \( \frac{\partial g_2}{\partial a}=4 \), which indicates that \( g_2 \) is increasing with respect to \( a \). 
 Below, we prove that $g_1$ is also increasing with respect to \( a \). In fact, we can compute that
\begin{align*}
    \frac{\partial g_1}{\partial a} &= 2(324a^2 - 36ab^2 + 36ab - 36a + b^4 - 2b^3 + 3b^2 - 2b + 1), \\
    \frac{\partial^2 g_1}{\partial a^2} &=648a -36( b-\frac{1}{2})^2-27.
\end{align*}
Obviously, \( \frac{\partial^2 g_1}{\partial a^2} \) is increasing with respect to \( a \). Note that for $a=0$, \( \frac{\partial^2 g_1}{\partial a^2} \) is negative, and when $a$ is large enough, \( \frac{\partial^2 g_1}{\partial a^2} \) is positive. So, \( \frac{\partial g_1}{\partial a} \) is first decreasing and then increasing with respect to \( a \). We solve $a$ from \( \frac{\partial^2 g_1}{\partial a^2}=0 \), and we get 
\begin{align}
    a = \frac{b^2}{18} - \frac{b}{18} + \frac{1}{18}.\label{eq:case a}
\end{align}
Substituting \eqref{eq:case a} into \( \frac{\partial g_1}{\partial a} \), we find that \( \frac{\partial g_1}{\partial a}=0 \). So, \( \frac{\partial g_1}{\partial a} \) is always non-negative for any $a\in {\mathbb R}_{>0}$ and for any $b\in (0,0.5)$. Hence, \( g_1 \) is increasing with respect to \( a \). 
By the implicit function theorem, we see that  $g_1(a, b)=0$ and $g_2(a,b)=0$ respectively define two implicit functions, say $a=a_1(b)$, and $a=a_2(b)$. It is directly to check that both are increasing functions.    
Note that for $b=0.5$, the only real root of $g_1(a,b)$ is $a\approx 0.04$, and the only real root of $g_2(a,b)$ is $a\approx 0.0625$. Therefore, for $a\ge 0.07$, ${\tt bp}_1(a,b;2,2)=0$ has no real solutions for any $b\in (0,0.5)$. 
\end{remark}

\subsubsection{Case 2: Steady State Consisting of Three Numbers}\label{section:Solution consists of three numbers}
In this section, we deal with \ref{Case_2}. 
Suppose $x$ is a steady state of the system \eqref{eq:case1}. Note again that $x$ is also a real solution of the system \eqref{eq:SAS}. 
According to the hypothesis of \ref{Case_2}, 
we assume that the coordinates of $x$ consist of three positive numbers $y$, $z$ and $w$, and they appear in $x$ respectively $n_1$ , $n_2$ and $n_3$ times ($n_1+n_2+n_3=n,\;n\ge 3$). We substitute $x_1=\cdots=x_{n_1}=y,\;x_{n_1+1}=\cdots=x_{n_1+n_2}=z,\;x_{n_1+n_2+1}=\cdots=x_n=w$ into the first $n$ algebraic equations listed in the system \eqref{eq:SAS}, and we get
    \begin{align}
\mathscr{G}_{21}(n_1,n_2,n_3)&:=y(1 - y)(y - b) - nay + a(n_1y + n_2z + n_3w)=0,\label{eq:f21}\\
\mathscr{G}_{22}(n_1,n_2,n_3)&:=z(1 - z)(z - b) - naz + a(n_1y + n_2z + n_3w)=0,\label{eq:f22}\\
\mathscr{G}_{23}(n_1,n_2,n_3)&:=w(1 - w)(w - b) - naw + a(n_1y + n_2z + n_3w)=0,\label{eq:f23}
\end{align}
where 
\begin{align}\label{eq:con23}
y,z,w,a\in {\mathbb R}_{\geq 0},\;b\in [0,0.5]
\end{align}
We call the above system ${\tt \mathscr{G}}_2(n_1, n_2,n_3)$. Using ${\tt Maple}$, we compute the border polynomial of the system ${\tt \mathscr{G}}_2(n_1, n_2,n_3)$ with the constraints \eqref{eq:con23}, and we obtain
\begin{align}
&{\tt bp}_2(a,b;n_1,n_2,n_3):=abn_1n_2n_3(b - \frac{1}{2})(n_1 - n_2)(16a^3n_1^3 + \frac{3}{4}a^3n_2^2n_3 +\cdots -\frac{1}{2}a^2b^2n_2n_3)\notag\\&\cdots (a^3n_1^3 - 15a^3n_1^2n_2 +\cdots -b^2). \label{bp_2(n_1,n_2,n_3)} 
\end{align}
We provide a ${\tt Maple}$ file \footnote{See: https://github.com/songkuo-ux/Allee-Effect/blob/master/3.2.2.mw} containing the full expression of ${\tt bp}_2(a,b;n_1,n_2,n_3)$ for the readers to check the computation presented in this section.
Note that if $n_1\neq n_2$ and $n_2=n_3$, then
\begin{align}
&{\tt bp}_2(a,b;n_1,n_2,n_2)=abn_1n_2(b - \frac{1}{2})(b + 1)(27a^3n_2^3 - 9a^2b^2n_2^2+\cdots -\frac{1}{4}b^2)\cdots(a^3n_1^3\notag\\& - 12a^3n_1^2n_2 +\cdots-b^2).\label{bp_2(n_1,n_3,n_3)}
\end{align}
Note that if $n_1= n_2= n_3$, then
\begin{align}
&{\tt bp}_2(a,b;n_1,n_1,n_1)=abn_1(b-\frac{1}{2})(b + 1)(27a^3n_1^3 - 9a^2b^2n_1^2 + ab^4n_1 + 9a^2bn_1^2 - 2ab^3n_1\notag\\& - 9a^2n_1^2 + 3ab^2n_1 - \frac{1}{4}b^4 - 2abn_1 + \frac{1}{2}b^3 + an_1 -\frac{1}{4}b^2).\label{bp_2(n_1,n_1,n_1)}
\end{align}
For instance, for $n_1=3,\;n_2=2,\;n_3=1$, the polynomial \eqref{bp_2(n_1,n_2,n_3)} becomes \eqref{eq:bp_2(a,b;4,3,2)}. For $n_1=4,\;n_2=n_3=1$, the polynomial \eqref{bp_2(n_1,n_3,n_3)} becomes \eqref{eq:bp_2(a,b;4,4,1)}, and for $n_1=n_2=n_3=2$, the polynomial \eqref{bp_2(n_1,n_1,n_1)} becomes \eqref{eq:bp_2(a,b;3,3,3)}.
\begin{align}
    &{\tt bp}_2(a,b;3,2,1) = 6ab(b-\frac{1}{2})(3a+\frac{1323}{2}a^3 - \frac{315}{4}a^2  +\cdots +3ab^4)\cdots (8ab^4 - 144a^2b^2 \notag\\& +\cdots+8a), \label{eq:bp_2(a,b;4,3,2)} \\
    &{\tt bp}_2(a,b;4,1,1) = 12ab(b-\frac{1}{2})(b + 1)(27a^3 - 9a^2b^2 +\cdots- \frac{1}{4}b^2)\cdots(4ab^4 - 27a^2b^2 +\notag\\&\cdots+ 4a), \label{eq:bp_2(a,b;4,4,1)} \\
    &{\tt bp}_2(a,b;2,2,2) = 2ab(b-\frac{1}{2})(b + 1)(216a^3 - 36a^2b^2 + 2ab^4 + 36a^2b - 4ab^3 - 36a^2 \notag\\&+ 6ab^2 - \frac{1}{4}b^4 - 4ab + \frac{1}{2}b^3 + 2a - \frac{1}{4}b^2). \label{eq:bp_2(a,b;3,3,3)}
    \end{align}
We respectively present the graphs of the above polynomials \eqref{eq:bp_2(a,b;4,3,2)}, \eqref{eq:bp_2(a,b;4,4,1)} and  \eqref{eq:bp_2(a,b;3,3,3)} in Fig. \ref{fig:3}.
\begin{remark}
The method of proving the ranges of \( a \) plotted in Fig.\ref{fig:3} is similar to that presented in Remark \ref{remark1}.   
\end{remark}
\begin{figure}[H]
    \centering
    \begin{minipage}{0.3\linewidth}
        \centering
        \includegraphics[width=0.9\linewidth]{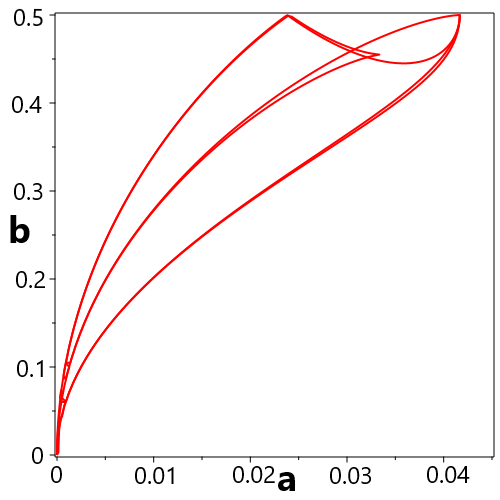}
        \subcaption{}\label{imp1}
    \end{minipage}
    \begin{minipage}{0.3\linewidth}
        \centering
        \includegraphics[width=0.9\linewidth]{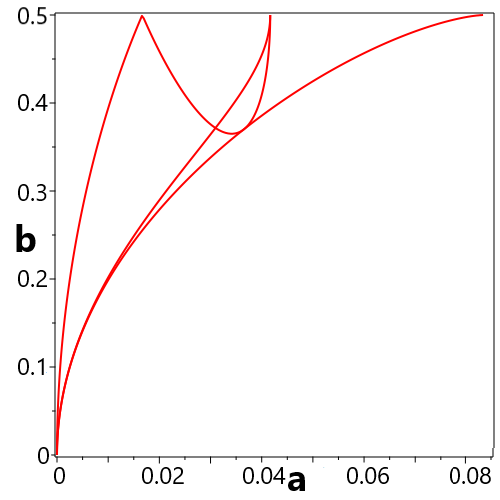}
        \subcaption{}\label{imp2}
    \end{minipage}
    \begin{minipage}{0.3\linewidth}
        \centering
        \includegraphics[width=0.9\linewidth]{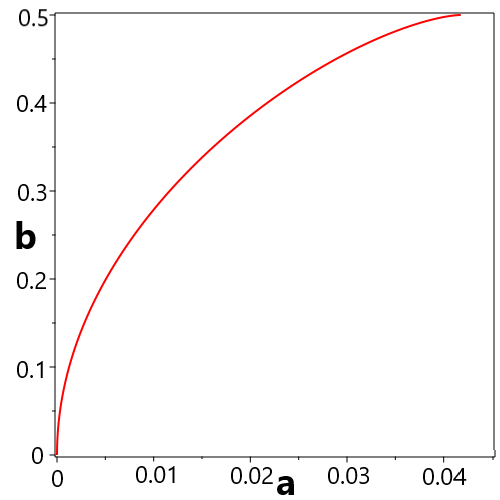}
        \subcaption{}\label{imp3}
    \end{minipage}
    \caption{(a) $n_1=3,\;n_2=2,\;n_3=1$, we plot the curve generated by ${\tt bp}_2(a,b;3,2,1)$. (b) For $n_1=4,\;n_2=n_3=1$, we plot the curve generated by ${\tt bp}_2(a,b;4,1,1)$.  (c) For $n_1=n_2=n_3=2$, we plot the curve generated by ${\tt bp}_2(a,b;2,2,2)$.}
    \label{fig:3}
\end{figure}
\subsubsection{Computing Border Polynomials}\label{section:Conclusion}
From the discussion presented in the previous two sections, we can conclude the following theorem, by which one can easily derive a border polynomial of the system \eqref{eq:SAS} for any dimension $n\in {\mathbb{N}_{+}}$.  
\begin{theorem}\label{thm:discriminant}
    For any $n \in {\mathbb{N}_{+}}$, a border polynomial of the system \eqref{eq:SAS} can be written as:
\begin{align}
   {\tt bp}(a,b;n) \;:=\; \prod_{\substack{n_1+n_2=n \\ n_1\geq n_2\textgreater 0}}^{}{{\tt bp}_1(a,b;n_1,n_2)}\prod_{\substack{n_1+n_2+n_3=n\\ n_1\geq n_2\geq n_3\textgreater 0}}^{}{{\tt bp}_2(a,b;n_1,n_2,n_3)}.\label{eq:theorem_2}
\end{align}
\end{theorem}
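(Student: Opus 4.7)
The plan is to verify that the polynomial $q(a,b) := {\tt bp}(a,b;n)$ defined by \eqref{eq:theorem_2} satisfies both conditions (a) and (b) of Definition \ref{def:bp}. The proof rests on Theorem \ref{theorem 1}, which decomposes every steady state of \eqref{eq:case1} according to how many distinct positive coordinate values it has, combined with the fact that each factor ${\tt bp}_1(a,b;n_1,n_2)$ and ${\tt bp}_2(a,b;n_1,n_2,n_3)$ in \eqref{eq:theorem_2} was already constructed, in Sections \ref{section:Solution consists of two numbers} and \ref{section:Solution consists of three numbers}, as a border polynomial of the corresponding two-dimensional or three-dimensional subsystem $\mathscr{G}_1(n_1,n_2)$ or $\mathscr{G}_2(n_1,n_2,n_3)$.

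First, I would decompose the solution set of \eqref{eq:SAS} into finitely many pieces indexed by the multiset type of the coordinate vector: the three trivial Case 0 solutions, together with, for every unordered partition $n=n_1+n_2$ with $n_1\geq n_2>0$ (respectively $n=n_1+n_2+n_3$ with $n_1\geq n_2\geq n_3>0$), the piece obtained by taking nondegenerate solutions $(y,z)$ of $\mathscr{G}_1(n_1,n_2)$ with $y\neq z$ (respectively $(y,z,w)$ of $\mathscr{G}_2(n_1,n_2,n_3)$ with pairwise distinct entries) and assigning them to the coordinates of $x$ in all combinatorially distinct ways. The number of such assignments is a fixed multinomial coefficient that depends only on the partition, not on $(a,b)$.

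For condition (a), fix $(a,b)$ with $q(a,b)\neq 0$. Then every factor ${\tt bp}_1$ and ${\tt bp}_2$ in the product is nonzero, so by the defining property of a border polynomial each subsystem $\mathscr{G}_1(n_1,n_2)$ and $\mathscr{G}_2(n_1,n_2,n_3)$ has only finitely many real solutions. Since the number of partitions of $n$ into at most three parts is finite, the total number of steady states of \eqref{eq:SAS} is finite. For condition (b), fix a connected component $U$ of $\{(a,b):q(a,b)\neq 0\}$. Then $U$ sits inside a single connected component of $\{{\tt bp}_1(a,b;n_1,n_2)\neq 0\}$ for every admissible $(n_1,n_2)$, and similarly for every admissible $(n_1,n_2,n_3)$, so each subsystem has a constant number of real solutions throughout $U$. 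Multiplying by the fixed multinomial weights from the previous paragraph and adding the Case 0 contribution yields a total steady state count that is constant on $U$.

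The main obstacle I expect is the bookkeeping needed to ensure that this decomposition is free of double counting. A subsystem solution in which two of the values $y,z,w$ coincide actually belongs to a lower case, and if such degenerations could appear freely inside a component $U$, the constant-multinomial argument above would fail. I would resolve this by observing that each ${\tt bp}_i$ contains factors that vanish precisely on the loci where such coincidences occur; consequently, on any connected component of $\{q\neq 0\}$ no subsystem solution degenerates, the map from subsystem solutions to vectors in ${\mathbb R}_{\geq 0}^n$ is injective with a constant multinomial fiber size, and the counts over the different partitions are genuinely additive. Once this is justified, both defining properties of a border polynomial follow directly and the theorem is established.
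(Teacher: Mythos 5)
Your overall route is the same as the paper's: the paper offers essentially no argument for Theorem \ref{thm:discriminant} beyond the sentence ``from the discussion presented in the previous two sections, we can conclude the following theorem,'' so the decomposition by partition type, the appeal to the subsystem border polynomials of $\mathscr{G}_1(n_1,n_2)$ and $\mathscr{G}_2(n_1,n_2,n_3)$, and the verification of conditions (a) and (b) of Definition \ref{def:bp} on connected components of $\{q\neq 0\}$ are exactly the intended (implicit) proof, written out more carefully than the paper does. Your observation that a connected component of $\{q\neq 0\}$ lies inside a single component of each factor's nonvanishing locus, so that each subsystem's solution count is constant there, is the correct core of the argument.

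However, your resolution of the obstacle you yourself flagged does not work as stated. You claim that the factors of each ${\tt bp}_i$ vanish precisely on the coincidence loci, so that ``on any connected component of $\{q\neq 0\}$ no subsystem solution degenerates.'' This is false: substituting $y=z$ into \eqref{eq:f11}--\eqref{eq:f12} makes the dispersal terms cancel and reduces both equations to $y(1-y)(y-b)=0$, so the diagonal solutions $(0,0)$, $(1,1)$, $(b,b)$ of $\mathscr{G}_1(n_1,n_2)$ exist for \emph{every} $(a,b)$ --- degenerate subsystem solutions are never absent, and no polynomial in $(a,b)$ can cut them out. The correct repair is not to exclude degenerations but to show their number is constant on each component of $\{q\neq 0\}$ and subtract: for $\mathscr{G}_1$ the degenerate solutions are always exactly the three Case 0 points (distinct whenever $ab(b-\tfrac12)\neq 0$, which holds on $\{q\neq 0\}$), and for $\mathscr{G}_2(n_1,n_2,n_3)$ the solutions with, say, $y=z$ are in bijection with the solutions of $\mathscr{G}_1(n_1+n_2,n_3)$, whose count is constant on components of $\{q\neq 0\}$ because ${\tt bp}_1$ for the two-part partition $\{n_1+n_2,n_3\}$ is itself a factor of $q$. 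An inclusion--exclusion over the coincidence patterns then shows the number of \emph{nondegenerate} solutions of each subsystem is constant, after which your multinomial weighting and summation go through. Without this correction the step you identified as the main obstacle remains open.
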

\begin{remark}
    For any positive integer \( n \) $(n\ge 3)$, there are \( n-2 \)  ways to partition it into three positive integers \( n_1 \), \( n_2 \) and \( n_3 \) satisfying $n_1\geq n_2\geq n_3$, and for any positive integer \( n \) $(n\ge 2)$, there are \(\left\lfloor \frac{n}{2} \right\rfloor\)  ways to partition it into two positive integers \( n_1 \) and \( n_2 \) satisfying $n_1\geq n_2$.
\end{remark}
\begin{example}\label{remark3}
For instance, for $n=4$, the border polynomial \eqref{eq:theorem_2} becomes 
\begin{align}
{\tt bp}(a, b;4)={\tt bp}_1(a, b;3,1){\tt bp}_1(a, b;2,2){\tt bp}_2(a, b;2,1,1)\label{eq:bp4},
\end{align}
where ${\tt bp}_1(a,b;3,1)$ and ${\tt bp}_1(a,b;2,2)$ are given in \eqref{bp_1(a,b;3,1)} and \eqref{bp_1(a,b;2,2)}, and by \eqref{bp_2(n_1,n_3,n_3)}, we can easily get 
\begin{align}
&{\tt bp}_2(a, b;2,1,1)=2ab(b-\frac{1}{2})(b + 1)( 27a^3 +\cdots- \frac{1}{4}b^2)\cdots(4ab^4 +\cdots + 4a).\label{bp2(2,1,1)}
\end{align}
\end{example}
\begin{remark}
    Comparing  to other methods for computing border polynomials or discriminant varieties  in ${\tt Maple}$, applying Theorem \ref{thm:discriminant} is much  more efficient for larger $n$, see Table \ref{table_1}.
    \end{remark}

\begin{table}[htbp]
\caption{Computational timings  for computing border polynomials}\label{tab2}
\begin{tabular*}{\textwidth}{@{\extracolsep\fill}cccc}
\toprule%
& \multicolumn{3}{@{}c@{}}{Timings\footnotemark[1]} \\\cmidrule{2-4}%
Dimension\footnotemark[2] & {\tt DiscriminantVariety} & {\tt BorderPolynomial} & Theorem $2$ \\
\midrule
$n=3$  & $\ge 2h$ & $0.58s$ & $0.062s$\\
$n=4$  & $\ge 2h$ & $2.4s$ &  $0.094s$\\
$n=5$  & $\ge 2h$ & $18s$ & $0.14s$\\
$n=6$  & $\ge 2h$ & $174s$ & $0.28s$ \\
$n=7$  & $\ge 2h$ & $\ge 2h$ & $0.69s$\\
$n=100$  & $\ge 2h$ & $\ge 2h$ & $9.8s$ \\
\botrule
\label{table_1}
\end{tabular*}
\footnotetext   {Note: We run the experiments by a 2.60 GHz Intel Core i7-9750H processor (8GB total memory) under Windows 10. Using {\tt Maple} command {\tt DiscriminantVariety}, we can not compute the discriminant variety of the system \eqref{eq:SAS} within $2$ hours for any $n\geq 3$. Using {\tt Maple} command {\tt BorderPolynomial}, we can compute the border polynomial of the system \eqref{eq:SAS} for $3\leq n\leq 6$ in a reasonable time. Applying Theorem \ref{thm:discriminant}, we can compute the border polynomial for pretty large $n$ such as $n=100$ in a short time.}

\footnotetext[1]{``Timings" means the computational time for completing the computation.}
\footnotetext[2]{``Dimension" means the number of coordinates of $x$ in the system \eqref{eq:SAS}.}
\end{table}

\begin{remark}
We remark that the border polynomial computed by the {\tt Maple} command {\tt BorderPolynomial} may give some redundant factors (i.e., the curve generated by the border polynomial may have some extra branches).  Comparing Fig.\ref{4.a} and Fig. \ref{4.b}, the blue curves plotted in Fig.\ref{4.a} are generated by those redundant factors.
\begin{figure}[H]
    \centering
    \begin{minipage}{0.49\linewidth}
        \centering
            \includegraphics[width=5.26cm, height=5.3cm]{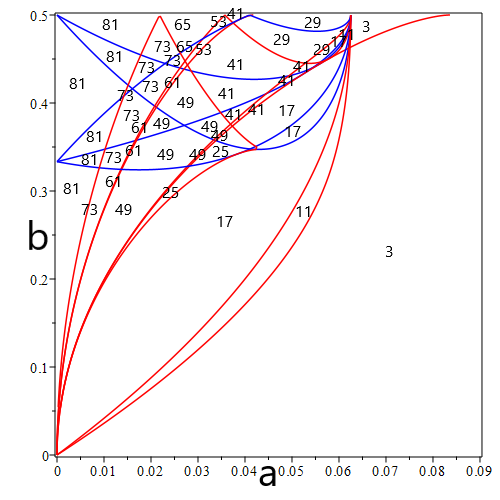}
            \subcaption{}\label{4.a}
    \end{minipage}
    \begin{minipage}{0.35\linewidth}
        \centering
        \includegraphics[width=5.4cm, height=5.3cm]{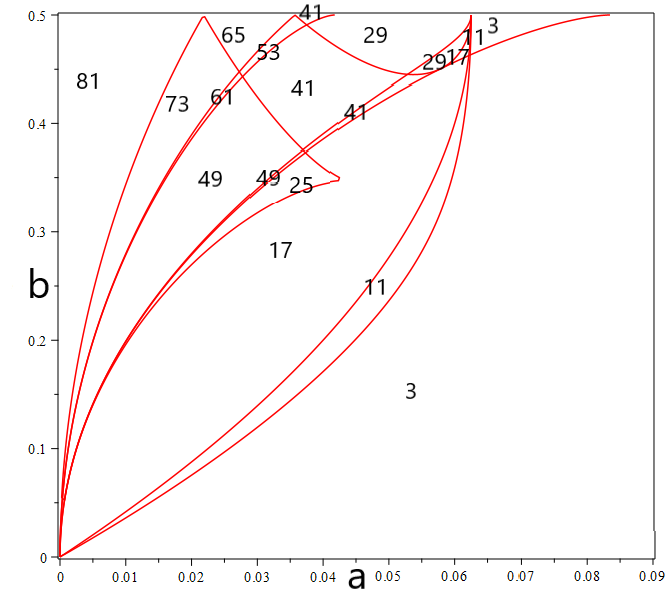}
        \subcaption{}\label{4.b}
    \end{minipage}
    \caption{(a) For $n=4$, we plot the curve generated by the border polynomial computed by the {\tt Maple} command {\tt BorderPolynomial}. (b) For $n=4$, we plot the curve generated by the border polynomial ${\tt bp}(a, b; 4)$ presented in \eqref{eq:bp4}. Over each open connected region, we give the number of real solutions of the system \eqref{eq:SAS}.}
    \end{figure}
\end{remark}
\subsection{Algorithm}\label{section:Solving Scheme-Region solution}
The hypersurface generated by the border polynomial \eqref{eq:theorem_2} divides the region  ${\mathbb R}_{\ge 0}\times [0,0.5]$ into finitely many open connected components. By Definition \ref{def:bp}, the number of real solutions of the system \eqref{eq:SAS}  (i.e., the number of steady states of the system \eqref{eq:case1}) is a constant over each open connected component (see Fig. \ref{4.a} and Fig. \ref{4.b}). 
In this section, we will show how to  compute the number of  steady states of the system \eqref{eq:case1} in each component by the following steps.  
\begin{enumerate}[label={\textbf{Step \arabic*}},ref=Step \arabic*]
    \item \label{Step_1}
   For any fixed $n \in {\mathbb{N}_{+}}$, according to Theorem \ref{thm:discriminant}, we compute the border polynomial ${\tt bp}(a, b;n)$  shown in \eqref{eq:theorem_2}.
    \item We apply cylindrical algebraic decomposition (CAD) to ${\tt bp}(a, b;n)$, and we get finitely many sample points, denoted by $(a_1,b_1),\ldots,(a_s,b_s)$ (here, by ``sample point" we mean for any open connected component determined by ${\tt bp}(a, b;n)\neq 0$, there exists $i\in \{1, \ldots, s\}$ such that the point $(a_i, b_i)$ is located in this component).  
    \item For each sample point $(a_i,b_i)\;(1\le i \le s)$, we compute the number of  steady states of the system \eqref{eq:case1} at the point $(a_i,b_i)$ by the following steps.
    \begin{enumerate}
        \item[\bf Step 3.1] We transform  the $n$-dimensional  system \eqref{eq:SAS} into several  two-dimensional and three dimensional systems. Recall that these systems are called $\mathscr{G}_1(n_1, n_2)$  (see \eqref{eq:f11}--\eqref{eq:f12}) for all $n_1$ and $n_2$ satisfying $n_1+n_2=n$ and  $n_1\geq n_2\textgreater 0$, and $\mathscr{G}_2(n_1,n_2,n_3)$ (see \eqref{eq:f21}--\eqref{eq:f23}) for all $n_1$, $n_2$ and $n_3$ satisfying $n_1+n_2+n_3=n$ and  $n_1\geq n_2\geq n_3\textgreater 0$. 
Notice that 
        according to Remark \ref{remark3}, there are $\left\lfloor \frac{n}{2} \right\rfloor$ two-dimensional  systems  and $n-2$ three-dimensional systems. 
        \item[\bf Step 3.2] For  $a=a_i$ and $b=b_i$, we compute the numbers of positive solutions for the algebraic systems  $\mathscr{G}_1(n_1, n_2)$  and $\mathscr{G}_2(n_1,n_2,n_3)$, denoted by $c_1(n_1,n_2)$ and $c_2(n_1,n_2,n_3)$. According to Theorem \ref{theorem3}, the number of steady states for the system \eqref{eq:case1} is given by the formula
        \eqref{eq:sum up}.
    \end{enumerate}
\end{enumerate}
\begin{theorem}\label{theorem3}
Given a positive integer  $n \in {\mathbb{N}_{+}}$ $(n\ge 3)$,  for any $a\in {\mathbb R}_{>0}$ and for any $b\in [0,0.5]$, if  the numbers of positive solutions for the systems $\mathscr{G}_1(n_1, n_2)$ (see \eqref{eq:f11}--\eqref{eq:f12}) and $\mathscr{G}_2(n_1,n_2,n_3)$ (see \eqref{eq:f21}--\eqref{eq:f23}) are $c_1(n_1,n_2)$ and $c_2(n_1,n_2,n_3)$, then the number of steady states of the system \eqref{eq:case1} is 
\begin{align}
3+\sum_{\substack{n_1+n_2=n\\ n_1\geq n_2\textgreater 0}}^{}{c_1(n_1,n_2)\binom{n}{n_1}}+\sum_{\substack{n_1+n_2+n_3=n\\ n_1\geq n_2\geq n_3\textgreater 0}}^{}{c_2(n_1,n_2,n_3)\binom{n}{n_1}\binom{n-n_1}{n_2}}\label{eq:sum up}.
\end{align}
\end{theorem}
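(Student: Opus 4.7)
The plan is to stratify the steady states of \eqref{eq:case1} by the number of distinct positive values appearing among the coordinates and then count each stratum via multinomial combinatorics. Theorem \ref{theorem 1} guarantees that this number is $1$, $2$, or $3$, matching exactly Case $0$, Case $1$, and Case $2$ introduced in the paper. A preliminary observation is that for $a > 0$ any nonnegative steady state $x^{*}$ either vanishes identically or has all strictly positive coordinates: if some $x_i^{*}=0$, the $i$-th defining equation collapses to $a\sum_{j\neq i} x_j^{*}=0$, forcing $x^{*}=\mathbf{0}$. Hence no mixed zero/positive steady states arise, and ``distinct positive values'' is the correct stratification.

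In Case $0$ the system collapses to $y(1-y)(y-b)=0$, producing the three explicit states $\mathbf{0}$, $\mathbf{1}$, and $b\mathbf{1}$, and accounting for the constant $3$ in the formula. In Case $1$, fixing multiplicities $n_1\geq n_2>0$ with $n_1+n_2=n$ and taking the canonical arrangement $x_1=\cdots=x_{n_1}=y$, $x_{n_1+1}=\cdots=x_n=z$ with $y\neq z$ reduces the $n$ defining equations to exactly $\mathscr{G}_1(n_1, n_2)$; any other steady state in this class is a coordinate permutation of the canonical one, and the number of distinct permutations of a multiset with $n_1$ copies of $y$ and $n_2$ copies of $z$ is $\binom{n}{n_1}$. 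So each genuine positive solution of $\mathscr{G}_1(n_1, n_2)$ contributes $\binom{n}{n_1}$ steady states, giving the summand $c_1(n_1, n_2)\binom{n}{n_1}$. Case $2$ is entirely parallel: each positive solution of $\mathscr{G}_2(n_1, n_2, n_3)$ with three distinct positive values admits $\binom{n}{n_1}\binom{n-n_1}{n_2}$ coordinate permutations. Summing over the allowed partitions of $n$ then yields \eqref{eq:sum up}.

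The main obstacle I anticipate is the bookkeeping of redundancies implicit in the definitions of $c_1$ and $c_2$. On the one hand, the solutions of $\mathscr{G}_1$ or $\mathscr{G}_2$ that collapse to fewer than the nominal number of distinct positive values (for instance $(y,z)=(1,1)$ or $(b,b)$, which are positive solutions of every $\mathscr{G}_1(n_1, n_2)$) merely reproduce Case $0$ steady states, and must be excluded from the counts so as not to be recounted across cases. On the other hand, whenever two multiplicities coincide (the situation $n_1=n_2$ in Case $1$, or $n_2=n_3$ in Case $2$), the corresponding subsystem is invariant under swapping the repeated-multiplicity variables, so a single unordered value pair or triple lifts to more than one ordered solution of $\mathscr{G}_1$ or $\mathscr{G}_2$; the quantities $c_1$ and $c_2$ must therefore be interpreted as counting orbits under this swap symmetry, so that each geometric steady state is counted exactly once in \eqref{eq:sum up}. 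Once these conventions are fixed, the argument reduces to the routine multinomial enumeration above, organized by the three strata coming from Theorem \ref{theorem 1}.
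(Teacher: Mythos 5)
Your argument is essentially the paper's own proof: stratify the steady states by the number of distinct positive coordinate values via Theorem \ref{theorem 1}, and count the coordinate arrangements of each value-multiset with the coefficients $\binom{n}{n_1}$ and $\binom{n}{n_1}\binom{n-n_1}{n_2}$. That said, the two ``bookkeeping'' caveats you flag are not merely anticipated obstacles --- they are genuinely necessary for \eqref{eq:sum up} to be correct, and the paper's proof passes over both in silence: (i) $c_1$ and $c_2$ must exclude solutions of $\mathscr{G}_1$ and $\mathscr{G}_2$ in which the nominally distinct values coincide (e.g.\ $(y,z)=(1,1)$ or $(b,b)$ solve every $\mathscr{G}_1(n_1,n_2)$ and would otherwise be recounted on top of the constant $3$), and (ii) when $n_1=n_2$ (resp.\ $n_2=n_3$) the subsystem is invariant under swapping the corresponding variables, so each unordered value pair (resp.\ triple) lifts to two ordered solutions and $c_1$ (resp.\ $c_2$) must count orbits, not ordered solutions, to avoid counting each steady state twice. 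One can check these conventions are forced by the $n=4$ example near $a=0$, where the total must be $3^4=81$. Your preliminary observation that for $a>0$ a vanishing coordinate forces the all-zero state is also a useful justification, absent from the paper, for why the three strata of positive values exhaust all nonnegative steady states.
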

\begin{proof}
 By Theorem \ref{theorem 1},   the coordinates of any steady state $x=(x_1,\cdots,x_n)\in {\mathbb R}_{>0}^n$ consist of at most three distinct positive numbers. If these coordinates are the same, then there are always three trivial steady states according to \ref{Case_0}. If these coordinates consist of two distinct numbers $y$ and $z$, then
 $(y,z)$ is a solution of the system $\mathscr{G}_1(n_1, n_2)$. Notice that there are $\binom{n}{n_1}$ kinds of solution vector in ${\mathbb R}^n$
 satisfying that $y$ and $z$ appear in $x$ respectively $n_1$ and $n_2$ times. If these coordinates consist of three distinct numbers $y$, $z$ and $w$, then
 $(y,z,w)$ is a solution of the system $\mathscr{G}_2(n_1, n_2, n_3)$. Notice that there are $\binom{n}{n_1}\binom{n-n_1}{n_2}$ kinds of solution vector in ${\mathbb R}^n$
 satisfying that $y$, $z$ and $w$ appear in $x$ respectively $n_1$, $n_2$ and $n_3$ times.
  So, the number of steady states for the system \eqref{eq:case1} is given by 
  \eqref{eq:sum up}.
\end{proof}
\subsection{Computational Results for High Dimensional Systems}\label{section:Examples}
We illustrate how to carry out the algorithm presented in Section \ref{section:Solving Scheme-Region solution} by the following example, which answers Problem \ref{Problem} for $n=4$.   
\begin{example}\label{example 1}
 For $n = 4$, we compute the number of the steady states of the system \eqref{eq:case1} for any generic $(a,b)\in {\mathbb R}_{\ge 0}\times [0,0.5]$. 
 \begin{enumerate}
    \item [\textbf{Step 1}]
For $n=4$, we write done the border polynomial given in \eqref{eq:theorem_2}:
\begin{align}
    {\tt bp}(a,b;4)={\tt bp}_2(a,b;2,1,1){\tt bp}_1(a,b;3,1){\tt bp}_1(a,b;2,2),\label{eq:exzample_disc}
\end{align}
where ${\tt bp}_1(a,b;3,1)$, ${\tt bp}_1(a,b;2,2)$  and ${\tt bp}_2(a,b;2,1,1)$ are given in \eqref{bp_1(a,b;3,1)}, \eqref{bp_1(a,b;2,2)}, and \eqref{bp2(2,1,1)}.
    
\item [\textbf{Step 2}] We apply CAD \cite{bib_CAD} to ${\tt bp}(a,b;4)$, and we get $469$ sample points denoted by $(a_1,b_1),\ldots,(a_{469},b_{469})$. This step is implemented by following commands in {\tt Maple}:
              \begin{figure}[H]
                    \centering
                 \includegraphics[width=8.26cm, height=1.85cm]{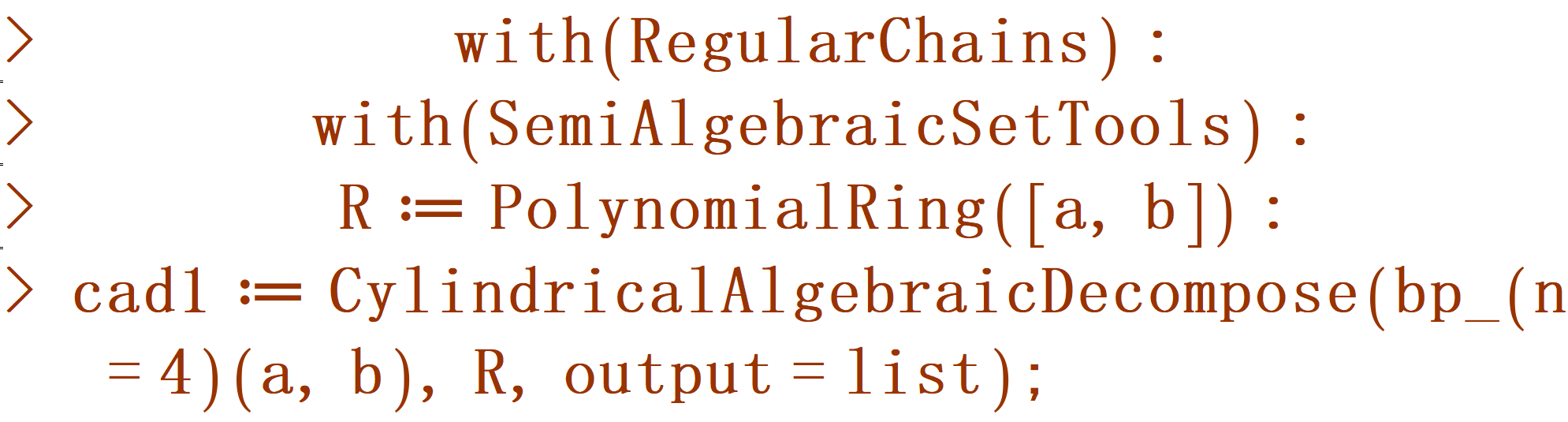}
            \end{figure}
\item [\textbf{Step 3}] 
For each sample point $(a_i,b_i)\;(1\le i \le 469)$, we compute the number of  steady states of the system \eqref{eq:case1} at the point $(a_i,b_i)$. For instance, we show the computational steps below for the first sample point $(a_1,b_1)=(\frac{1319}{1048576}, \frac{363843}{2097152})$.
\begin{enumerate}
    \item[\bf Step 3.1] For $n=4$, we transform the $n$-dimensional system \eqref{eq:SAS} into $2$ two-dimensional systems and
$1$ three-dimensional system, denoted by $\mathscr{G}_1(2,2)$ (see \eqref{eq:case16}), $\mathscr{G}_1(3,1)$ (see \eqref{eq:case17}) and $\mathscr{G}_2(2,1,1)$ (see \eqref{eq:case18}).
    \begin{enumerate}
        \item Assume that  the coordinates of $x$ consist of two positive numbers $y$ and $z$, and $y$ and $z$ appear in $x$ respectively $2$ and $2$ times. Suppose that  $x_1=x_{2}=y$, $x_{3}=x_4=z$. We substitute $x_1=x_{2}=y,\;x_{3}=x_4=z$ into the system \eqref{eq:SAS}, and we get $\mathscr{G}_1(2,2)$ (or, one can directly substitute $n_1=n_2=2$ into the systems \eqref{eq:f11}--\eqref{eq:f12}):
            \begin{equation}
            \begin{aligned}
                y(1 - y)(y - b) - 4ay + a(2y+2z)&=0,\\
                z(1 - z)(z - b) - 4az + a(2y+2z)&=0.\label{eq:case16}
            \end{aligned}
            \end{equation}
        \item  Assume that the coordinates of $x$ consist of two positive numbers $y$ and $z$, and $y$ and $z$ appear in $x$ respectively $3$ and $1$ times. Suppose that  $x_1=x_2=x_{3}=y$, $x_4=z$. We substitute $x_1=x_2=x_{3}=y,\;x_4=z$ into the system \eqref{eq:SAS}, and we get $\mathscr{G}_1(3,1)$ (or, one can directly substitute $n_1=3$ and $n_2=1$ into the systems \eqref{eq:f11}--\eqref{eq:f12}):
            \begin{equation}
            \begin{aligned}
                y(1 - y)(y - b) - 4ay + a(3y+z)&=0,\\
                z(1 - z)(z - b) - 4az + a(3y+z)&=0.\label{eq:case17}
            \end{aligned}
            \end{equation}
        \item Assume that the coordinates of $x$ consist of three positive numbers $y$, $z$ and $w$, and $y$, $z$ and $w$ appear in $x$ respectively 
        $2$, $1$ and $1$ times. Suppose that $x_1=x_2=y$, $x_3=z$, $x_4=w$.
            We substitute $x_1=x_2=y,\;x_3=z,\;x_4=w$ into the system \eqref{eq:SAS}, and we get $\mathscr{G}_2(2,1,1)$ (or, one can directly substitute $n_1=2$ and $n_2=n_3=1$ into the systems \eqref{eq:f21}--\eqref{eq:f23}):
            \begin{equation}
            \begin{aligned}
                y(1 - y)(y - b) - 4ay + a(2y+z+w)&=0,\\
                z(1 - z)(z - b) - 4az + a(2y+z+w)&=0,\\
                w(1 - w)(w - b) - 4az + a(2y+z+w)&=0.\label{eq:case18}
            \end{aligned}
            \end{equation}
        \end{enumerate}
        \item[\bf Step 3.2] Using {\tt Maple} command {\tt Isolate} in the package {\tt RootFinding} \cite{bib3}, for the sample point $(a_1,b_1)=(\frac{1319}{1048576}, \frac{363843}{2097152})$,   we respectively  compute the numbers of positive solutions of the systems $\mathscr{G}_1(2,2)$ \eqref{eq:case16}, $\mathscr{G}_1(3,1)$ \eqref{eq:case17} and $\mathscr{G}_2(2,1,1)$ \eqref{eq:case18}, denoted by $c_1(2,2),\;c_1(3,1)$ and $c_2(2,1,1)$.
        \begin{enumerate}
            \item 
            For the system  $\mathscr{G}_1(2,2)$ \eqref{eq:case16}, we can get $c_1(2,2)=6$ by the following  {\tt Maple} command 
            \begin{figure}[H]
              \centering
             \includegraphics[width=8.26cm, height=1.65cm]{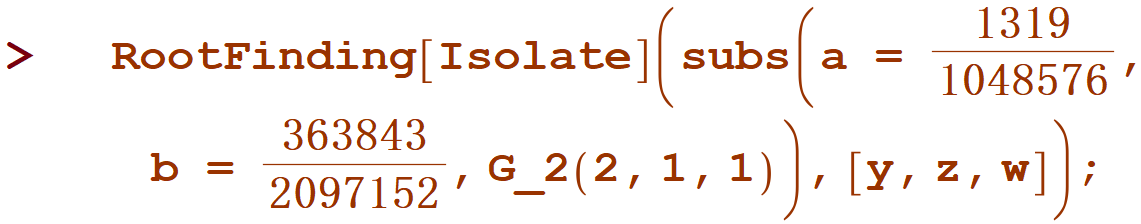}
                 \end{figure}
         \item
         For the system  $\mathscr{G}_1(3,1)$ \eqref{eq:case17}, we can get $c_1(3,1)=6$ by the following  {\tt Maple} command 
        \begin{figure}[H]
             \centering
             \includegraphics[width=8.26cm, height=1.65cm]{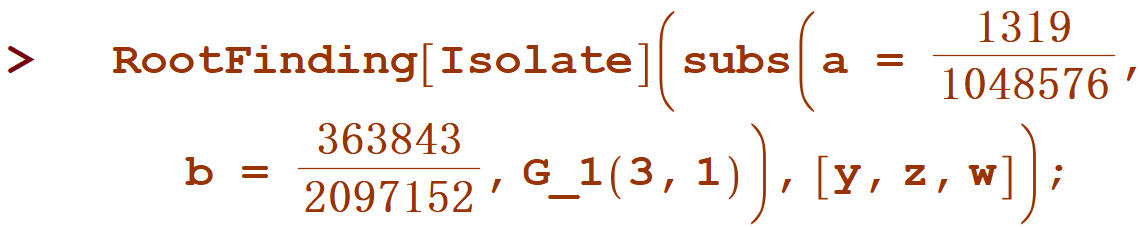}
         \end{figure}
        \item
        For the system $\mathscr{G}_2(2,1,1)$ \eqref{eq:case18}, we can get $c_2(2,1,1)=3$ by the following  {\tt Maple} command 
\begin{figure}[H]
        \centering
        \includegraphics[width=8.26cm, height=1.65cm]{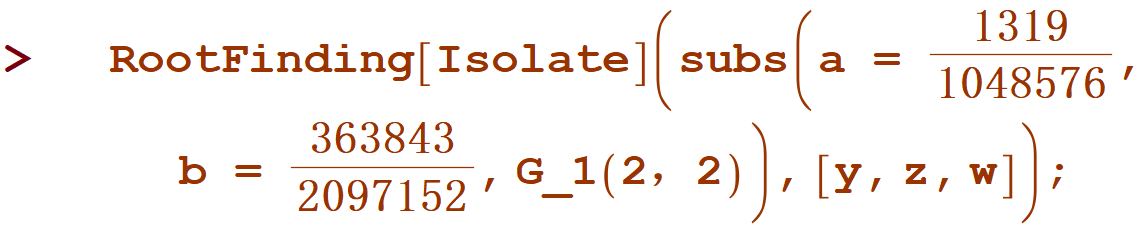}
        \end{figure}
\end{enumerate}
      By Theorem \ref{theorem3}, the number of steady states for the system \eqref{eq:case1} is 
    \begin{align}
        c_1(2,2)\binom{4}{2}+c_1(3,1)\binom{3}{1}+c_2(2,1,1)\binom{4}{2}\binom{2}{1}+3=81.
    \end{align}
    \end{enumerate}
   Similarly, we can get the number of steady states of the system \eqref{eq:case1} for the other sample points. And we plot the number of steady states over each open component determined by the border polynomial,   see Fig. \ref{Fig.4}. This graph is made by {\tt Paint 3D}.
\begin{figure}[htbp]
    \centering
    \includegraphics[height=6cm,width=14cm]{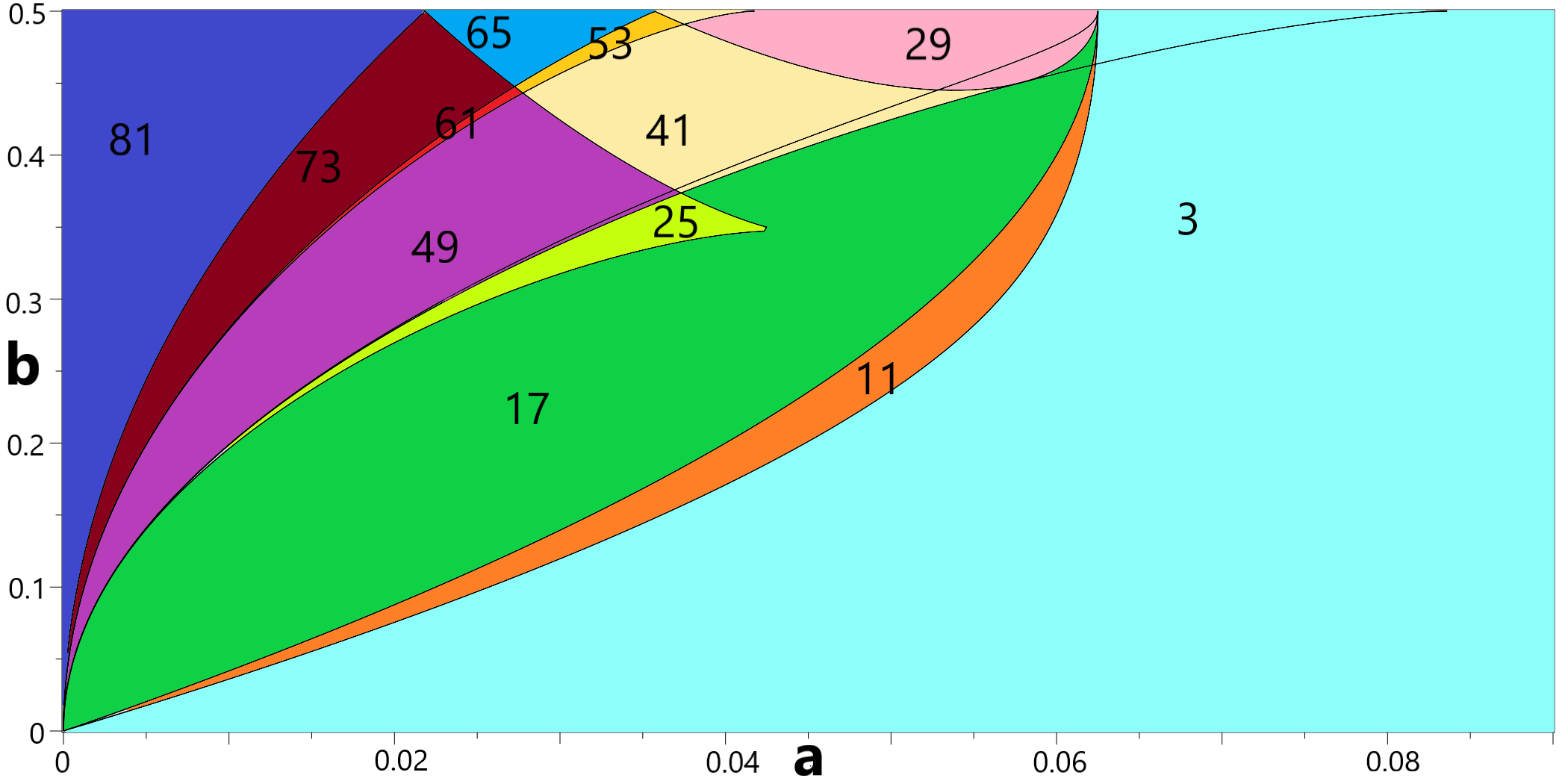}
    \caption{For $n=4$, we plot the (positive) real root classification of the steady-state system \eqref{eq:SAS}, which answers the steady state classification problem for the system \eqref{eq:case1}.}
    \label{Fig.4}
    \end{figure}
 \end{enumerate}
\end{example}
    Similarly to Example \ref{example 1}, we can compute and plot the (positive) real root classification of the steady-state system \eqref{eq:SAS} for $n=5,6,7$.
    We respectively present the graphs  for $n=5,6,7$  in Fig. \ref{the picture of $5$-dimensional system},  Fig. \ref{the picture of $6$-dimensional system} and Fig. \ref{the picture of $7$-dimensional system}. Above all, we have answered Problem 
    \ref{Problem} for $n=4,5, 6,7$. We provide a folder \footnote{see: https://github.com/songkuo-ux/Allee-Effect/blob/example} containing the computations presented in this section. 
\begin{figure}[H]
    \centering
    \includegraphics[height =6cm,width=14cm]{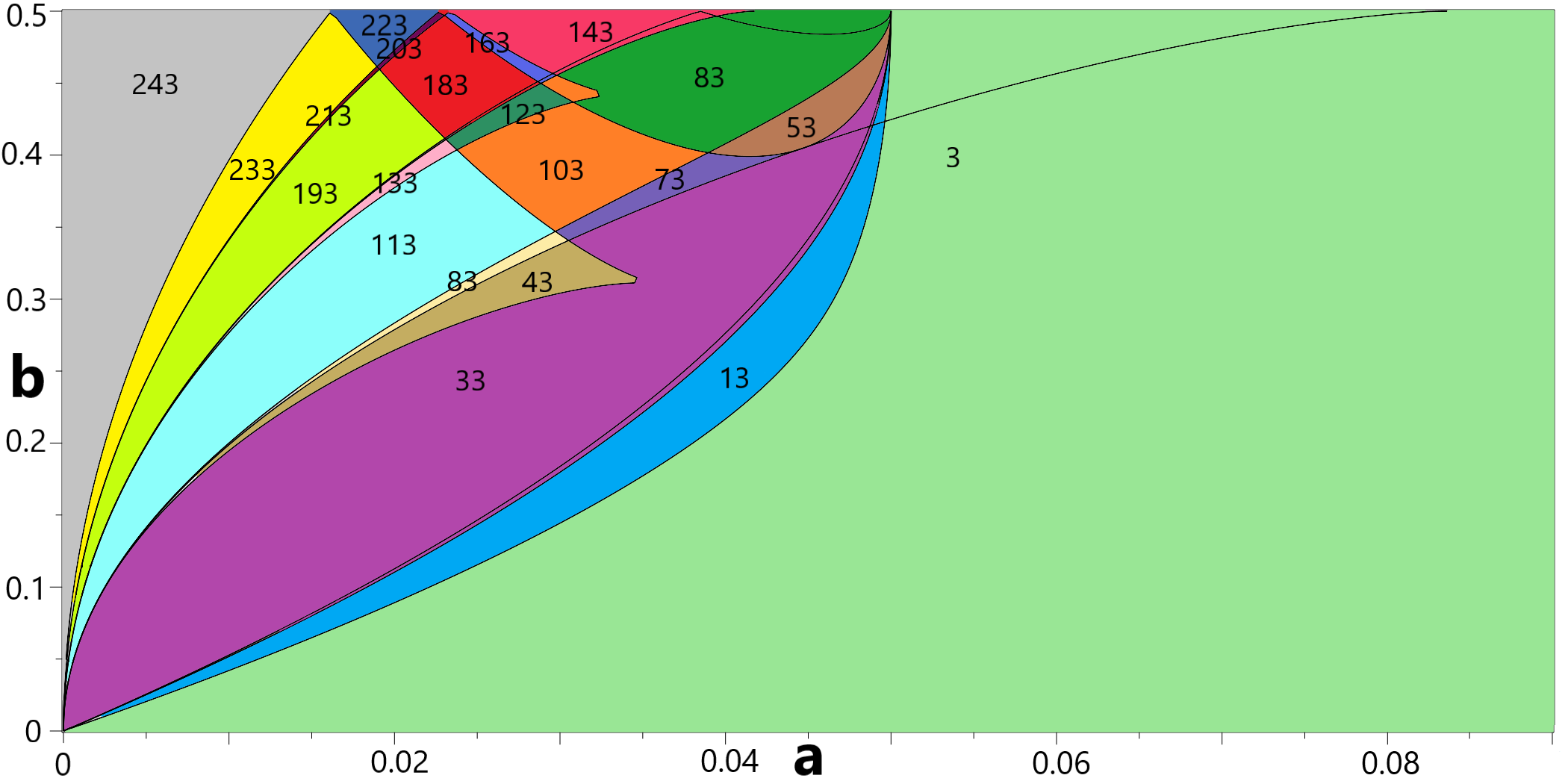}
    \caption{For $n=5$, we plot the (positive) real root classification of the steady-state system \eqref{eq:SAS}, which answers the steady state classification problem for the system \eqref{eq:case1}.}
    \label{the picture of $5$-dimensional system}
\end{figure}
\begin{figure}[H]
    \centering
    \includegraphics[height =6cm,width=14cm]{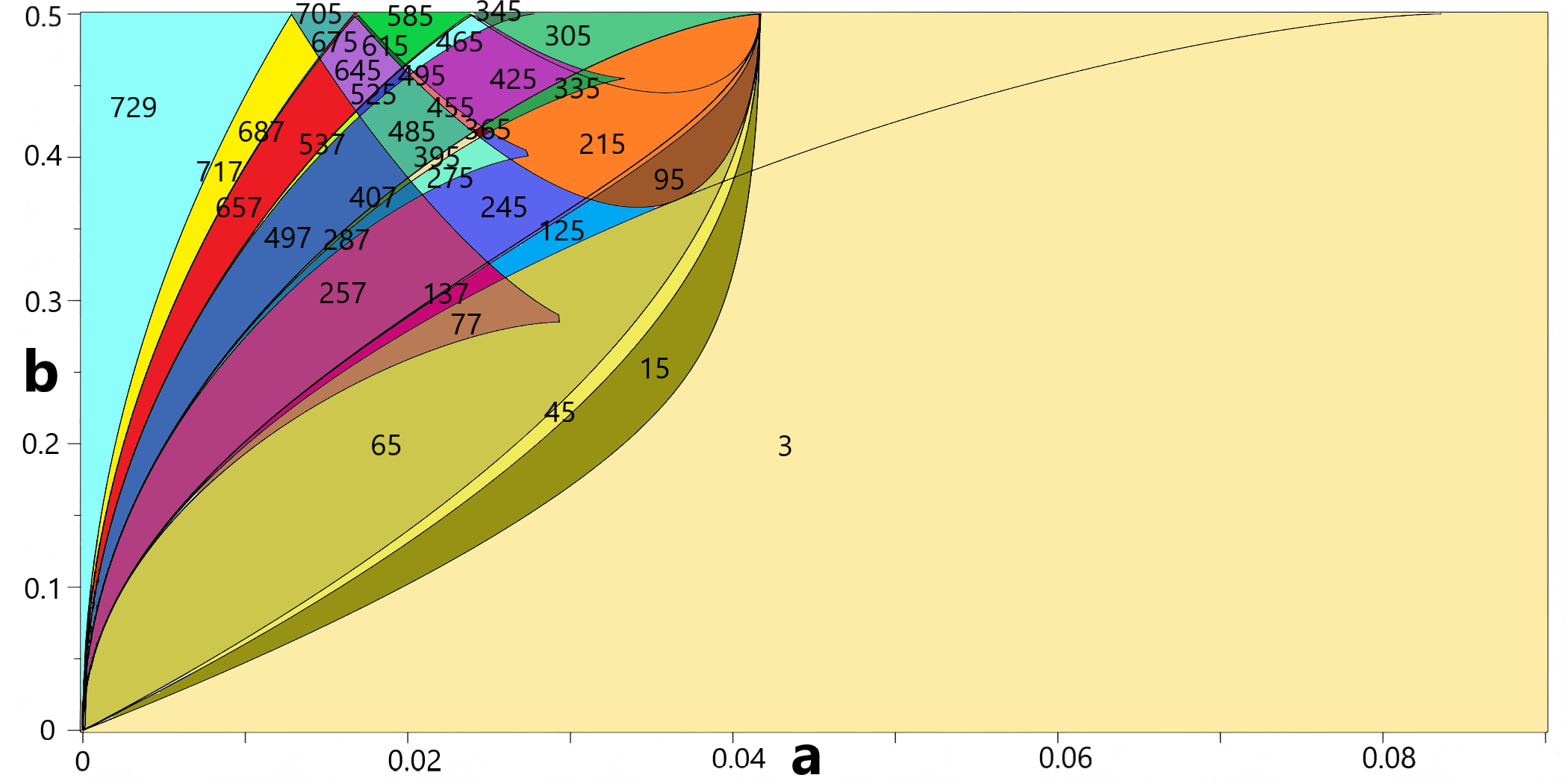}
    \caption{For $n=6$, we plot the (positive) real root classification of the steady-state system \eqref{eq:SAS}, which answers the steady state classification problem for the system \eqref{eq:case1}.}
    \label{the picture of $6$-dimensional system}
\end{figure}
\begin{figure}[H]
    \centering
    \includegraphics[height =6cm,width=14cm]{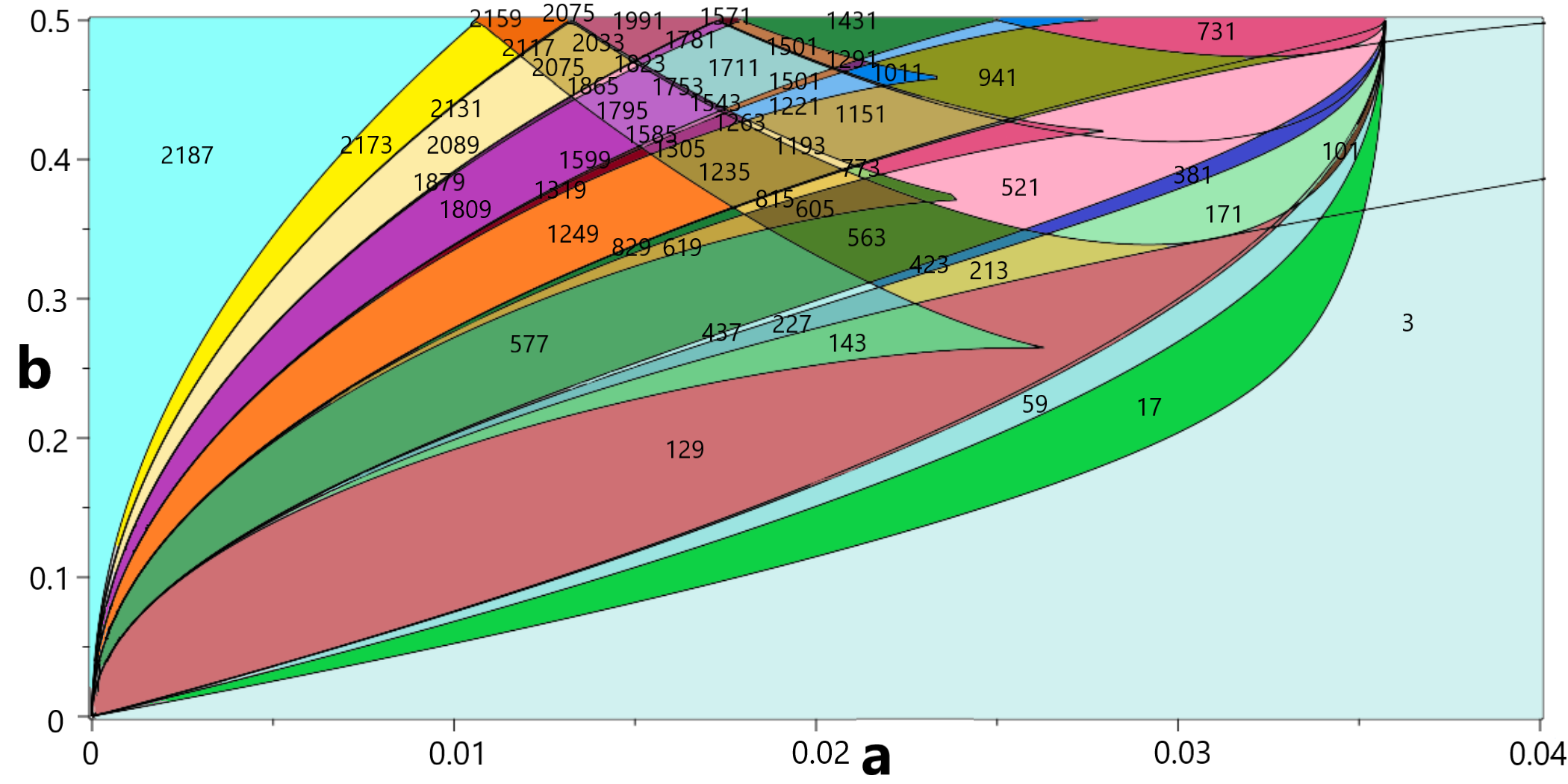}
    \caption{For $n=7$, we plot the (positive) real root classification of the steady-state system \eqref{eq:SAS}, which answers the steady state classification problem for the system \eqref{eq:case1}.}
    \label{the picture of $7$-dimensional system}
\end{figure}
\section{Conclusion and Discussion}\label{section:Conclusion and Discussion}
In this work, we solve Problem \ref{Problem} for $n=4,5,6,7$. 
According to our computational results, we can efficiently compute the border polynomials for pretty large $n$ (for instance, $n=100$). However, for $n>7$, it takes more than two days to carry out the CAD for the border polynomials. So, in the future, it is nice to  
study how to improve the efficiency for carrying out the CADs of border polynomials  for large $n$.
\bibliography{sn-article}
\end{document}